\documentclass{article}

\usepackage[english]{babel}
\usepackage[utf8]{inputenc}
\usepackage[T1]{fontenc}

\usepackage{amsmath,amssymb,amsthm,mathtools}
\usepackage{textcomp}
\usepackage[makeroom]{cancel}
\usepackage{multicol}
\usepackage{colortbl}
\usepackage{xcolor}
\usepackage{tabularx}
\usepackage{tikz}
\usepackage{ragged2e}
\usepackage{caption}
\usepackage{listings}
\usepackage[toc,page]{appendix}
\usepackage{booktabs}
\usepackage{graphicx}
\usepackage{multirow}
\usepackage{dutchcal}
\DeclareMathAlphabet{\pazocal}{OMS}{zplm}{m}{n}

\def\Acal{\pazocal{A}}
\def\Bcal{\pazocal{B}}
\def\Dcal{\pazocal{D}}

\def\Lcal{\pazocal{L}}
\def\Nbb{\mathbb{N}}
\def\Pcal{\pazocal{P}}
\def\Rcal{\pazocal{R}}
\def\Scal{\pazocal{S}}
\DeclareMathOperator{\End}{End}

\usepackage{accents}
\newcommand{\dcap}{\accentset{\frown}{\delta}}
\newcommand{\Dcap}{\accentset{\frown}{\Dcal}}

\newtheorem{thm}{Theorem}[section]

\newtheorem{Lem}[thm]{Lemma}

\newtheorem{Definition}[thm]{Definition}
\newtheorem{rem}[thm]{Remark}
\newtheorem{Example}[thm]{Example}

\lstset{
	language=Python,
	basicstyle=\footnotesize\ttfamily,
	frame=single,
	breaklines=true,
	keywordstyle=\bfseries,
	stringstyle=\itshape,
	commentstyle=\itshape,
	showstringspaces=false,
	lineskip=0pt,
	xleftmargin=1em,
	xrightmargin=1em
}

\usepackage{hyperref}
\usepackage{doi}

\title{On Derivations of Tensor Products of Perm Algebras and Associative Algebras}

\author{
	Jos\'e Gregorio Rodr\'iguez-Nieto\thanks{Departamento de Matem\'aticas, Universidad Nacional de Colombia, Carrera 65 No.~59A-110, Medell\'in, Colombia.}\\[-2mm]
	{\small\textcolor{gray}{\texttt{jgrodrig@unal.edu.co}}}
	\and
	Olga Patricia Salazar-D\'iaz\footnotemark[1]\\[-2mm]
	{\small\textcolor{gray}{\texttt{olgasalazd@unal.edu.co}}}
	\and
	Andr\'es Sarrazola-Alzate\thanks{Departamento de Ciencias B\'asicas e Ingenier\'ia, Universidad EIA, Calle 23 AA Sur No.~5-200, Kil\'ometro 2+200 Variante al Aeropuerto Jos\'e Mar\'ia C\'ordova, Envigado, Antioquia, Colombia.}\\[-2mm]
	{\small\textcolor{gray}{\texttt{andres.sarrazola@eia.edu.co}}}
	\and
	Ra\'ul Vel\'asquez\thanks{Instituto de Matem\'aticas, Universidad de Antioquia, Calle 67 No.~53--108, Medell\'in, Colombia.}\\[-2mm]
	{\small\textcolor{gray}{\texttt{raul.velasquez@udea.edu.co}}}
}
\date{}

\begin{document}
	
	\maketitle
	
	\begin{abstract}
		The study of derivations and their generalizations on non-associative algebras has proven to be fundamental in understanding the internal symmetries and algebraic dynamics of such structures. In this paper, we investigate derivations and diderivations of tensor product dialgebras arising from the combination of a perm algebra and a unital associative algebra. We provide decomposition theorems that characterize these operators in terms of derivations of the individual factors and suitable multiplication maps. Explicit coordinate formulas are also derived, allowing concrete descriptions of the action of derivations and diderivations with respect to natural bases. These results extend classical decomposition theorems for tensor products beyond the associative setting, highlighting the interplay between perm algebras and non-associative algebraic frameworks.
	\end{abstract}
	
	\textbf{Keywords:} Derivation, Diderivation, Tensor product, Perm algebra, Dialgebra, Associative algebra.

	\section*{Introduction}
	
	The study of \emph{derivations} has long been central to algebra, serving as tools for understanding the symmetry and infinitesimal structure of algebraic systems. A derivation on an algebra \(\Acal\) is a linear operator $d$ that satisfies the Leibniz rule:
	\[
	d(xy) = d(x)y + xd(y),
	\]
	for all \(x, y \in \Acal\). The collection of all derivations, \(\Dcal er(\Acal)\), naturally carries a Lie algebra structure via the commutator bracket \cite{DerTensorProductsBresar,Azam2005}.
	\justify
	When constructing new algebraic structures via \emph{tensor products}, one naturally wonders how derivations behave under such operations. In the associative or Lie algebra setting, Saeid Azam generalized classical results by characterizing the derivation algebra of the tensor product of two algebras and its fixed-point subalgebras under automorphisms \cite{Azam2005}. For \emph{non-associative algebras}, Matej Brešar established a decomposition result: assuming mild finiteness conditions, every derivation of a tensor product \(\Rcal \otimes \Scal\) can be expressed as a sum of three types - inner derivations from the nucleus, derivations stemming from one factor paired with central elements of the other, and vice versa \cite{DerTensorProductsBresar}.
	\justify
	Expanding further, recent work on \emph{generalized derivations} of tensor products shows that these mappings in \(\Acal \otimes \Bcal\) are composed of left multiplications by elements in the nucleus and derivations from each factor accompanied by elements in the respective centers \cite{GeneralizedDerivations2021}. These results collectively form a comprehensive algebraic framework for analyzing how derivations and their generalizations interact with tensor product structures beyond the associative realm.
	\justify
	Parallel to these developments, \emph{perm algebras} - which are associative and non commutative structures satisfying permutation identities - have garnered attention through cohomological studies, especially regarding extensions and bialgebraic structures \cite{Hou2023}.
	
	\subsection*{Motivation and Objectives}
	\justify
	Motivated by recent progress on the study of derivations and their behavior under tensor operations, this work focuses on the analysis of \emph{derivations and diderivations} arising from the tensor product between a perm algebra and a unital associative algebra. 
	More precisely, we consider the perm algebra $\Pcal$, defined over $k[x]\otimes k[x]$ with the product $P\otimes f\circ Q\otimes g = P(0)Q(x)\otimes fg$, and investigate the tensor product $\Dcal = \Pcal \otimes \Acal$.
	\justify
	The first goal of this study is to describe how the derivation algebra of $\Dcal$ decomposes in terms of derivations of $\Pcal$ and $\Acal$, together with appropriate multiplication operators that capture their interaction. 
	Building upon this decomposition, we establish an analogous result for diderivations, which act as left derivations on the perm-algebra component and as derivations on the associative component. 
	Finally, we provide explicit coordinate expressions with respect to natural bases, making the internal structure of these operators transparent and clarifying how they act in concrete algebraic terms.
	\justify
	Altogether, these results extend both classical and modern developments in the framework of \emph{dialgebras}—algebraic systems endowed with two associative operations—and offer new insight into how the algebraic components interact within tensor constructions. 
	The approach emphasizes not only the structural decomposition of derivations and diderivations but also their computational and conceptual implications for broader algebraic settings.
	
	\subsection*{Outline}
	
	The paper begins by establishing the necessary background in algebraic structures, introducing perm algebras, dialgebra structures, and the concepts of derivations and diderivations, along with the tensor product construction that underlies our setting. It then develops and proves the main decomposition theorem for derivations of the tensor product dialgebra, relating them to derivations of each factor combined with multiplication operators and drawing analogies with classical decomposition results in the literature. This is followed by an analogous treatment for diderivations, highlighting their dual role as right derivations on the perm component and derivations on the algebra component. The subsequent section provides explicit coordinate formulas with respect to natural bases, making the results more concrete and applicable. Finally, the paper concludes with a discussion of the implications of these decompositions, possible generalizations to operadic and other non-associative frameworks, and prospective directions for further research.
	
	\subsection*{Notation and Conventions}
	
	Throughout this paper, we work over a base field $k$ of characteristic zero, unless stated otherwise. 
	To keep notation consistent, we adopt the following conventions. 
	For a $k$-algebra $\Acal$, the space of its derivations is denoted by $\Dcal er(\Acal)$, while for a dialgebra $\Dcal$, we write $\Dcal ider(\Dcal)$ for the space of its diderivations. 
	All tensor products are taken over the base field $k$.
	\justify
	Elements of $k[x]$ are represented by symbols such as $P(x)$, $Q(x)$, $f(x)$, and $g(x)$. 
	The canonical basis of $k[x]\otimes k[x]$ is written as 
	\[
	x^{\underline{i}} = x^{i_1}\otimes x^{i_2}, \qquad \underline{i}=(i_1,i_2)\in\mathbb{N}^2.
	\]
	Whenever coordinate maps are considered with respect to a basis, they are assumed to vanish for all but finitely many indices. 
	These conventions will be used consistently throughout the text to simplify the exposition and ensure uniformity in notation.
	
	\section{Preliminaries}\label{sec:preliminaries}
	
	In this section, we recall the main algebraic structures involved: left and right perm algebras, dialgebras, and the concepts of derivations and diderivations in the dialgebraic context. We include foundational references from Chapoton, Loday, Hou, and recent work by Restrepo-S\'anchez and collaborators \cite{Restrepo2025}.
	
	\subsection{Left and Right Perm Algebras}
	
	A \emph{left perm algebra} $(\Pcal,\circ)$ is an associative $k$-algebra whose product satisfies the \emph{left permutative identity}:
	\begin{equation}\label{rightperm}
		(x\circ y)\circ z = (y\circ x)\circ z,
	\end{equation}
	for all $x,y,z\in\Pcal$. 
	\justify
	Similarly, a \emph{right perm algebra} satisfies:
	\begin{equation}\label{leftperm}
		x\circ (y\circ z) = x\circ (z\circ y),
	\end{equation}
	for all $x,y,z\in\Pcal$.
	\justify
	These structures are operadically governed by the \(\mathrm{Perm}\) operad, introduced by Chapoton and studied further by Hou~\cite{Hou2023} from the perspective of extension and cohomology theory. The operad \(\mathrm{Perm}\) is Koszul dual to the pre-Lie operad, as explained in Gnedbaye’s operadic treatment~\cite{Gnedbaye2016}.
	
	\begin{Example}\label{Example P_0}
		Let $k[x]$ be the polynomial algebra. Define
		\[
		P(x)\circ Q(x) := P(0)\,Q(x).
		\]
		Then $(k[x],\circ)$ is a left perm algebra: swapping $x$ and $y$ in \eqref{leftperm} does not affect the result because evaluation at zero is symmetric.
	\end{Example}
	
	\subsection{The Dialgebra Variety}
	
	A \emph{dialgebra} $(\Dcal,\vdash,\dashv)$ over $k$ is a $k$-vector space with associative bilinear operations
	\[
	\vdash,\;\dashv : \Dcal\times \Dcal\to \Dcal,
	\]
	which satisfy the compatibility identities
	\begin{equation*}\label{D3}
		x\dashv (y \dashv z)=x\dashv (y \vdash z),
	\end{equation*}
	\begin{equation*}\label{D4}
		(x\dashv y )\vdash z=(x\vdash y) \vdash z,
	\end{equation*}
	\begin{equation*}\label{D5}
		x\vdash (y \dashv z)=(x\vdash y) \dashv z.
	\end{equation*}
	The general theory of dialgebras - also known as “Loday-type algebras” - was introduced by Loday and further developed in operadic terms (see, e.g.,~\cite{Gnedbaye2016}).
	\justify
	Salazar-Díaz and collaborators have contributed significantly to the study of dialgebra varieties. Notably, Salazar-Díaz, Velásquez \& Wills-Toro~\cite{SalazarVelasquezWills2015} propose a construction of dialgebra varieties via bimodules and equivariant mappings, closely related to the Kolesnikov-Pozhidaev construction, which we explain in the next subsection.
	
	\subsubsection{The KP-algorithm and Dialgebras}
	\justify
	Kolesnikov's work \cite{kolesnikov2008}, together with Pozhidaev's extension to $n$-ary algebras \cite{pozhidaev2009}, provided a general algorithm—the \emph{KP-algorithm}—to convert a set of multilinear identities for algebras into a set of multilinear identities for dialgebras.
	\justify
	The idea is to define a dialgebra associated with a given variety $\mathcal{M}$ of algebras such that the corresponding variety of dialgebras coincides with the Hadamard product
	\[
	\Pcal erm \otimes \mathcal{A} ,
	\]
	where $\mathcal{A}$ is the operad governing the variety $\mathcal{M}$ and $\Pcal erm$ is the operad governing associative algebras satisfying the left-commutativity relation
	\[
	(xy)z - (yx)z = 0.
	\]
	\justify
	For an arbitrary variety $\mathcal{M}$ of algebras with one binary operation (governed by an operad $\mathcal{A}$), the KP-algorithm allows one to deduce the defining identities for the class of (\textit{di}-)algebras governed by the operad $\Pcal erm \otimes \mathcal{A}$ starting from the defining identities of $\mathcal{M}$.
	\justify
	The basic idea is the following: take an algebra $\Acal$ in the variety $\mathcal{M}$ with a bilinear operation $*$, and let $(\Pcal,\circ) \in \Pcal erm$. Then the space $\Pcal \otimes \Acal$ equipped with the products $\dashv$ and $\vdash$ defined by
	\begin{equation*}
		(p \otimes a) \vdash (q \otimes b) = p\circ q \otimes (a * b), 
		\qquad
		(p \otimes a) \dashv (q \otimes b) = q\circ p \otimes (a * b),
	\end{equation*}
	for all $p,q \in \Pcal$ and $a,b \in \Acal$, it carries a dialgebra structure combining the perm algebra and the algebra $\Acal$.
	\justify
	Hence, for the variety $\mathcal{M}$, the corresponding variety of dialgebras $di\text{-}\mathcal{M}$ is the variety of all algebras with bilinear operations $\dashv$ and $\vdash$, determined by the identities that hold in all algebras of the form $\Pcal \otimes \Acal$, with $\Acal \in \mathcal{M}$ and $\Pcal \in \Pcal erm$, i.e.
	\[
	di\text{-}\mathcal{M} = \mathrm{Var}\bigl(\{\Pcal \otimes \Acal \mid \Pcal \in \Pcal,\, \Acal \in \mathcal{M}\}\bigr).
	\]
	\justify
	Pavel S.~Kolesnikov and V.~Yu.~Voronin, \cite{Kolesnikov2013}, showed that for every dialgebra $\Dcal \in \mathrm{di}\text{-}\mathcal{M}$ there exists an algebra $\Dcap \in \mathcal{M}$ such that $\Dcal$ embeds into
	\[
	\Pcal_{0} \otimes \Dcap \in di\text{-}\mathcal{M},
	\]
	where $\Pcal_{0} \in \Pcal erm$ is as in example \ref{Example P_0}.
	\justify
	More recently, Restrepo-Sánchez et al.~\cite{Restrepo2025} studied the dialgebra structure on $k[x]\otimes k[x]$ and classified its derivations and diderivations.
	
	\subsection{Derivations and Diderivations on Dialgebras}
	
	Let $(\Dcal,\vdash,\dashv)$ be a dialgebra.
	\justify
	A \emph{derivation} $d: \Dcal\to \Dcal$ is a linear map satisfying:
	\[
	d(x\vdash y) = d(x)\vdash y + x\vdash d(y),\quad
	d(x\dashv y) = d(x)\dashv y + x\dashv d(y),
	\]
	for all $x,y\in \Dcal$. That is, $d$ is simultaneously a derivation for both operations.
	\justify
	We will denote by $\pazocal{D}er(\Dcal)$ the vector space generated by all the derivations $d:\Dcal\rightarrow\Dcal$ defined on $\Dcal$. In order to show the existence of derivations over any dialgebra, we may consider the linear map $ad_a:\Dcal\rightarrow\Dcal$ defined by $ad_a\coloneqq R_a^{\dashv}-L_{a}^{\vdash}$, where $R_a^{\dashv}$ and $L_a^{\vdash}$ are the \textbf{multiplicative operators} $R_a^{\dashv}(b)=b\dashv a$ and $L_a^{\vdash}(b)=a\vdash b$, respectively.
	\justify
	A \emph{diderivation} $\delta: \Dcal\to \Dcal$ satisfies a “mixed” Leibniz property:
	\[
	\delta(x\vdash y) =\delta(x\dashv y) = \delta(x)\dashv y + x\vdash \delta(y).
	\]
	Thus, $\delta$ acts as a derivation on the point side of the products.
	\justify
	We will denote by $\pazocal{D}ider(\Dcal)$ the vector space generated by all the diderivations of $\mathcal{D}$. This space is not empty because for every $a\in\mathcal{D}$ a straightforward calculation shows that $Ad_a\in\pazocal{D}ider(\mathcal{D})$, where $Ad_a=R^{\vdash}_a-L^{\dashv}_a$ and therefore $Ad_a(b)=b\vdash a-a\dashv b$.
	
	\justify
	Moreover, the vector space $\pazocal{D}ider(\Dcal)$ is a bimodule over the de Lie algebra  $\pazocal{D}er(\Dcal)$ with action definied by  $d\cdot \delta:=[d,\delta]$.
	\justify
	The notion of diderivation was employed in the classification of derivations and diderivations of tensor dialgebra structures, as in~\cite{Restrepo2025}.
	
	\begin{rem}
		If $\vdash = \dashv$ coincides with an associative multiplication, then derivations and diderivations coincide.
	\end{rem}
	\justify
	In the setting of perm algebras, we have the following notions. 
	
	\begin{Definition}
		Let $(\Pcal,\cdot)$ be a perm algebra over a field $k$. 
		The vector space consisting of all left derivations on $\Pcal$ is denoted by
		\[
		\Lcal\Dcal er(\Pcal) = \{\, \delta:\Pcal\to\Pcal \mid \delta(x\cdot y)=x\cdot \delta(y)+y\cdot \delta(x),\ \forall\,x,y\in\Pcal \,\}.
		\]
		Elements of $\Lcal\Dcal er(\Pcal)$ are called \emph{left derivations} of $\Pcal$.
	\end{Definition}
	
	\begin{rem}
		In a similar way, one can consider the space of all \emph{right derivations} on $\Pcal$, denoted by $\Rcal\Dcal er(\Pcal)$, consisting of all linear maps $\delta:\Pcal\to\Pcal$ satisfying
		\[
		\delta(x\cdot y)=\delta(x)\cdot y+\delta(y)\cdot x, \qquad \text{for all } x,y\in\Pcal.
		\]
		Both $\Lcal\Dcal er(\Pcal)$ and $\Rcal\Dcal er(\Pcal)$ are linear subspaces of $\End(\Pcal)$. 
		These two families of derivations play complementary roles in the structure theory of perm algebras and their associated dialgebras.
	\end{rem}

	\section{Derivations on the Tensor Product Dialgebra}\label{sec:der_tensor}
	\justify
	In the previous section, we considered the following structure of a (left) perm algebra over the ring of polynomials in a single variable $k[x]$:
	\begin{equation*}
		P(x)\circ Q(x)\coloneqq P(0)Q(x).
	\end{equation*}
	We denote the perm algebra $(K[x],\circ)$ by $\Pcal_0$. In the sequel, we will also consider the tensor product (left) perm algebra $\Pcal\coloneqq k[x]\otimes k[x]$, whose algebraic structure is defined by 
	\begin{equation*}
		P(x)\otimes f(x)\bullet Q(x)\otimes g(x)\coloneqq P(x)\circ Q(x)\otimes f(x)g(x).
	\end{equation*}
	
	\begin{rem}
		Any polynomial with independent coefficient equals one $a_{n}x^n+\cdots + a_1x+1$ is a left unit of both  $\Pcal_0$ and $\Pcal$. In particular, the constant polynomial $1$ is a left unit.
	\end{rem}
	\justify
	On the other hand, let $\Acal$ be an associative unital $K$-algebra.  We may endow the tensor product $\Dcal_0\coloneqq \Pcal_0\otimes\Acal$ of a dialgebra (associative) structure by defining the following products:
	\begin{equation*}
		P(x)\otimes a\vdash Q(x)\otimes b\coloneqq P(x)\circ Q(x)\otimes ab = P(0)Q(x)\otimes ab
	\end{equation*}
	and 
	\begin{equation*}
		P(x)\otimes a\dashv Q(x)\otimes b\coloneqq Q(x)\circ P(x)\otimes ab = Q(0)P(x)\otimes ab.
	\end{equation*}
	We embed the previous dialgebra into the dialgebra $\Dcal\coloneqq \Pcal\otimes \Acal$ whose algebraic structures are defined by:
	\begin{equation*}
		P(x)\otimes f(x)\otimes a\vdash Q(x)\otimes g(x)\otimes b \coloneqq P(x)\circ Q(x)\otimes f(x)g(x)\otimes ab        
	\end{equation*}
	and 
	\begin{equation*}
		P(x)\otimes f(x)\otimes a\dashv Q(x)\otimes g(x)\otimes b \coloneqq Q(x)\circ P(x)\otimes f(x)g(x)\otimes ab.
	\end{equation*}
	\justify
	On the other hand, by definition, every derivation $d\in \Dcal er(\Dcal)$ satisfies 
	\begin{equation}\label{Der_general}
		d(\underline{P}\otimes a) = d(\underline{1}\otimes a\vdash \underline{P}\otimes 1) = d(\underline{1}\otimes a)\vdash \underline{P}\otimes 1 + \underline{1}\otimes a\vdash d(\underline{P}\otimes 1),
	\end{equation}
	this means that, in order to establish a concrete classification of $d$, it is necessary to explicitly determine its behaviour on the basic tensor products $\underline{1}\otimes a$ and $\underline{P}\otimes 1$, where $\underline{P}\in\Pcal$ and $a\in\Acal$.

	We have the following analogue of \cite[Lemma 2.1]{DerTensorProductsBresar}.
	\begin{Lem}\label{Der_Perm}
		Let $\{u_k\;|\; k\in I\}$ be a basis of $\Acal$ as a $k$-vector space and $d\in \Dcal er(\Dcal)$ be a derivation. Then for each $(\underline{i},k)\in \Nbb^2\times I$ there exists a derivation $d_{(\underline{i},k)}$ of $\Pcal$ such that for every $\underline{P}\coloneqq P(x)\otimes f(x)\in \Pcal$ we have
		\begin{equation*}
			d\left(\underline{P}\otimes 1\right) =\displaystyle\sum_{(\underline{i},k)\in \Nbb^2\times I}d_{(\underline{i},k)}\left(\underline{P}\right)\otimes u_k
		\end{equation*}
		and $d_{(\underline{i},k)}(\underline{P})=0$ for all but finitely many $(\underline{i},k)\in \Nbb^2\times I$.
	\end{Lem}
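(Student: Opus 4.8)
The plan is to read off the components of $d(\underline{P}\otimes 1)$ with respect to a basis of $\Dcal=\Pcal\otimes\Acal$ built from the bases of the two factors, and then to show that the resulting component maps are forced to be derivations of $\Pcal$ by the Leibniz rule for $\vdash$. Using the basis $\{u_k\mid k\in I\}$ of $\Acal$, every element of $\Dcal$ has a unique expansion $\sum_{k}\underline{R}_k\otimes u_k$ with $\underline{R}_k\in\Pcal$ and almost all $\underline{R}_k=0$. Hence for each $\underline{P}\in\Pcal$ we may write $d(\underline{P}\otimes 1)=\sum_{k}D_k(\underline{P})\otimes u_k$; since $d$ is linear and the expansion is unique, each $D_k\colon\Pcal\to\Pcal$ is a well-defined linear map, and for a fixed $\underline{P}$ only finitely many $D_k(\underline{P})$ are nonzero, which already accounts for the finite-support assertion.

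The heart of the argument is to prove that each $D_k$ is a derivation of $(\Pcal,\bullet)$. I would apply $d$ to the identity $(\underline{P}\otimes 1)\vdash(\underline{Q}\otimes 1)=(\underline{P}\bullet\underline{Q})\otimes 1$, which holds because $1\cdot 1=1$ in $\Acal$ and because $\vdash$ restricts on $\Pcal\otimes 1$ to the perm product $\bullet$. Expanding the left-hand side as $\sum_k D_k(\underline{P}\bullet\underline{Q})\otimes u_k$ and, via the Leibniz rule together with the unit relations $u_k\cdot 1=1\cdot u_k=u_k$, the right-hand side as $\sum_k\bigl(D_k(\underline{P})\bullet\underline{Q}+\underline{P}\bullet D_k(\underline{Q})\bigr)\otimes u_k$, I would compare coefficients. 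Linear independence of $\{u_k\}$ then yields $D_k(\underline{P}\bullet\underline{Q})=D_k(\underline{P})\bullet\underline{Q}+\underline{P}\bullet D_k(\underline{Q})$ for every $k$, i.e. $D_k\in\Dcal er(\Pcal)$.

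To match the stated indexing by $\Nbb^2\times I$, I would then expand each derivation $D_k$ over a basis $\{\partial_{\underline{i}}\mid \underline{i}\in\Nbb^2\}$ of $\Dcal er(\Pcal)$ furnished by the classification of the derivations of $k[x]\otimes k[x]$, writing $D_k=\sum_{\underline{i}}\lambda_{\underline{i},k}\,\partial_{\underline{i}}$ as a finite combination and setting $d_{(\underline{i},k)}:=\lambda_{\underline{i},k}\,\partial_{\underline{i}}$, each of which is again a derivation of $\Pcal$. Substituting gives $d(\underline{P}\otimes 1)=\sum_{(\underline{i},k)}d_{(\underline{i},k)}(\underline{P})\otimes u_k$ with the required finite support.

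I expect the only genuine subtlety to be confirming that the coordinate maps satisfy the honest $\bullet$-Leibniz rule rather than some skewed variant: this is exactly where one uses that $\vdash$ restricted to $\Pcal\otimes 1$ reproduces $\bullet$ and that the $\Acal$-factor stays inert because we evaluate at the unit $1$. (Running the same computation with $\dashv$ produces the identity with the arguments transposed, which by symmetry of the pair $(\underline{P},\underline{Q})$ is the same derivation condition, so either product serves.) The remaining work—the double-index bookkeeping and the appeal to the known structure of $\Dcal er(\Pcal)$ to realize the $\Nbb^2$ parameter—is routine.
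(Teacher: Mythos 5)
The first two-thirds of your argument is correct: expanding $d(\underline{P}\otimes 1)=\sum_k D_k(\underline{P})\otimes u_k$ along the basis $\{u_k\}$ alone, and extracting the Leibniz rule for each $D_k$ from $d\bigl((\underline{P}\otimes 1)\vdash(\underline{Q}\otimes 1)\bigr)$ by uniqueness of the $u_k$-components, is sound (and your parenthetical remark that $\dashv$ yields the same condition with the arguments transposed is also correct). The genuine gap is in your final step. First, you invoke ``a basis $\{\partial_{\underline{i}}\mid\underline{i}\in\Nbb^2\}$ of $\Dcal er(\Pcal)$ furnished by the classification of the derivations of $k[x]\otimes k[x]$'': no such classification of the derivations of the perm algebra $(\Pcal,\bullet)$ is established in this paper or available to it (the cited reference treats the dialgebra structure on $k[x]\otimes k[x]$, a different object), and nothing guarantees that $\Dcal er(\Pcal)$ has a basis indexed by $\Nbb^2$. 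Second, even granting such a basis, the finite-support claim breaks: since $I$ may be infinite, infinitely many of the maps $D_k$ can be nonzero as operators even though, for a fixed $\underline{P}$, all but finitely many of the values $D_k(\underline{P})$ vanish; for such $k$ the individual terms $\lambda_{\underline{i},k}\,\partial_{\underline{i}}(\underline{P})$ need not vanish (only their sum over $\underline{i}$ does), so your $d_{(\underline{i},k)}(\underline{P})$ could be nonzero for infinitely many $(\underline{i},k)$, and the displayed sum would then not even be finitely supported as the Lemma requires.

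Fortunately that last step is unnecessary, because the $\Nbb^2$-factor of the index set is inert in the statement: nothing in the Lemma ties $d_{(\underline{i},k)}$ to the monomial $x^{\underline{i}}$. Simply set $d_{((0,0),k)}:=D_k$ and $d_{(\underline{i},k)}:=0$ for $\underline{i}\neq(0,0)$; then each $d_{(\underline{i},k)}$ is a derivation of $\Pcal$, the expansion formula holds, and the pointwise finite support follows from the finiteness you already observed. With this replacement your proof is complete. For comparison, the paper instead works with the finer coordinate functions $d_{(\underline{i},k)}(\underline{P}):=\lambda_{(\underline{i},k)}(\underline{P})\,x^{\underline{i}}$ attached to the full basis $\{x^{\underline{i}}\otimes u_k\}$ of $\Dcal$, which makes finite support automatic, and then runs the same Leibniz computation; note, however, that at that finer level the termwise derivation property does not follow from linear independence alone, since the products $x^{\underline{i}}\bullet\underline{Q}$ and $\underline{P}\bullet x^{\underline{i}}$ are not supported on the single monomial $x^{\underline{i}}$, so the different $\underline{i}$-components mix within each fixed $k$. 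Your coarser, $I$-indexed decomposition is exactly the level at which the coefficient comparison is airtight, so after the trivial re-indexing your route is, if anything, the more robust one.
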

	
	\begin{proof}
		Let us consider the coordinate functions defined by the basis $\{u_k\;|\;k\in I\}$ of $\Acal$ and the monomials  $\{x^{\underline{i}}\; |\; \underline{i}\in\Nbb^2\}$:
		\begin{equation*}
			d(\underline{P}\otimes 1) = \displaystyle\sum_{(\underline{i},k)\in\Nbb^2\times I} \lambda_{(\underline{i},k)}x^{\underline{i}}\otimes u_k = \displaystyle\sum_{(\underline{i},k)\in \Nbb^2\times I}d_{(\underline{i},k)}(\underline{P})\otimes u_k,
		\end{equation*}
		where $\lambda_{(\underline{i},k)}=0$ for all but finitely many $(\underline{i},k)\in \Nbb^2\times I$. In other words, we define $d_{(\underline{i},k)}(\underline{P}) \coloneqq \lambda_{(\underline{i},k)}x^{\underline{i}}$. It is clear that $d_{(\underline{i},k)}:\Pcal \rightarrow \Pcal$ is linear due to the linearity of $d$. Furthermore, if $\underline{Q}=Q(x)\otimes g(x)\in \Pcal$, then 
		\begin{align*}
			\displaystyle\sum_{(\underline{i},k)\in \Nbb^2\times I}d_{(\underline{i},k)}\left(\underline{P}\bullet \underline{Q}\right)\otimes u_k &=
			d\left(\underline{P}\bullet \underline{Q}\otimes 1\right)\\
			& = d((\underline{P}\otimes 1)\vdash (\underline{Q}\otimes 1)) \\
			& = d(\underline{P}\otimes 1)\vdash (\underline{Q}\otimes 1) + (\underline{P}\otimes 1)\vdash d(\underline{Q}\otimes 1)\\
			& = \left(\displaystyle\sum_{(\underline{i},k)\in \Nbb^2\times I}d_{(\underline{i},k)}(\underline{P})\otimes u_k \right)\vdash (\underline{Q}\otimes 1)\\
			&\hspace{0.4 cm} + (\underline{P}\otimes 1)\vdash \left(\sum_{(\underline{i},k)\in \Nbb^2\times I}d_{(\underline{i},k)}(\underline{Q})\otimes u_k\right)\\
			& = \sum_{(\underline{i},k)\in \Nbb^2\times I}\left(d_{(\underline{i},k)}(\underline{P})\bullet \underline{Q}+\underline{P}\bullet d_{(\underline{i},k)}(\underline{Q})\right)\otimes u_{k}
		\end{align*}
		yields
		\begin{equation*}
			\sum_{(\underline{i},k)\in \Nbb^2\times I}\left(d_{(\underline{i},k)}(\underline{P}\bullet \underline{Q})-d_{(\underline{i},k)}(\underline{P})\bullet \underline{Q}-\underline{P}\bullet d_{(\underline{i},k)}(\underline{Q}\right)\otimes u_k = 0,
		\end{equation*}
		which implies $d_{(\underline{i},k)}\in\Dcal er(\Pcal)$.
	\end{proof}
	
	Let us now examine the first term on the right-hand side of (\ref{Der_general}).
	
	\begin{Lem}\label{Der_Alg}
		Let $\left\{x^{\underline{i}}\coloneqq x^{i_1}\otimes x^{i_2}\;|\; \underline{i}\coloneqq \left(i_1,i_2\right)\in \Nbb^2\right\}$ be the canonical basis of $k[x]\otimes k[x]$ as a $k$-vector space, and $d\in \Dcal er(\Dcal)$ be a derivation. Then for every $a\in\Acal$ there exist $J\subseteq \Nbb\times\Nbb$, $\varphi_{(\underline{j},k)}\in Hom_{k-Vect}(\Acal,\Acal)$ and $D_{(0,j,k)}\in \Dcal er(\Acal)$ for every $(\underline{j},k)\in J\times I$ and $(0,j,k)\in \Nbb^2\times I$, such that 
		\begin{equation*}
			d(\underline{1}\otimes a) = \displaystyle\sum_{(\underline{j},k)\in J\times I}x^{\underline{j}}\otimes \varphi_{(\underline{j},k)}(a) + \displaystyle\sum_{(0,j,k)\in \Nbb^2\times I} 1\otimes x^{\underline{j}}\otimes D_{(0,j,k)}(a),
		\end{equation*}
		and $\varphi_{(\underline{j},k)}(a) = D_{(0,j,k)}(a)=0$ for all but finitely many $(\underline{j},k)\in J\times \Nbb$. 
	\end{Lem}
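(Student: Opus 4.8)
The plan is to mimic the proof of Lemma~\ref{Der_Perm}: expand $d(\underline{1}\otimes a)$ in the canonical basis $\{x^{i_1}\otimes x^{i_2}\otimes u_k\}$ of $\Dcal$, read off coordinate functions, and then impose the Leibniz rule to decide which coordinates assemble into derivations of $\Acal$ and which into mere linear maps. Concretely, I would write
\[
d(\underline{1}\otimes a) = \sum_{(\underline{j},k)\in\Nbb^2\times I}\lambda_{(\underline{j},k)}(a)\,x^{j_1}\otimes x^{j_2}\otimes u_k,
\]
where each $\lambda_{(\underline{j},k)}:\Acal\to k$ is linear (by linearity of $d$) and, for fixed $a$, vanishes outside a finite set. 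Splitting the index set according to whether $j_1\ge 1$ or $j_1=0$ produces the two sums in the statement: the $j_1\ge 1$ block is gathered into the maps $\varphi_{(\underline{j},k)}$, while the $j_1=0$ block (where $x^{j_1}=1$, so $x^{\underline{j}}=1\otimes x^{j_2}$) will be shown to yield the derivations $D$.

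The engine of the proof is the observation that on the constant--constant slice both dialgebra products collapse to the multiplication of $\Acal$, i.e.\ $\underline{1}\otimes a\vdash\underline{1}\otimes b=\underline{1}\otimes ab$. Applying $d$ and the Leibniz rule for $\vdash$, I would expand both $d(\underline{1}\otimes a)\vdash(\underline{1}\otimes b)$ and $(\underline{1}\otimes a)\vdash d(\underline{1}\otimes b)$ using the explicit rule for $\vdash$. The decisive point is the perm evaluation $x^{j_1}\circ 1=x^{j_1}(0)$, which equals $1$ if $j_1=0$ and $0$ otherwise: in the first product this annihilates every term with $j_1\ge 1$ and leaves only $\sum_{j_1=0}\lambda_{((0,j_2),k)}(a)\,1\otimes x^{j_2}\otimes u_k b$, whereas in the second product $1\circ x^{j_1}=x^{j_1}$ survives for all $j_1$, giving $\sum\lambda_{(\underline{j},k)}(b)\,x^{j_1}\otimes x^{j_2}\otimes a u_k$. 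This asymmetry is precisely what separates the two blocks.

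I would then compare coefficients of each basis monomial in the identity $d(\underline{1}\otimes ab)=d(\underline{1}\otimes a)\vdash\underline{1}\otimes b+\underline{1}\otimes a\vdash d(\underline{1}\otimes b)$. For the monomials with $j_1=0$, setting $D_{j_2}:=\sum_k\lambda_{((0,j_2),k)}(\,\cdot\,)u_k$ yields $D_{j_2}(ab)=D_{j_2}(a)\,b+a\,D_{j_2}(b)$, so each $D_{j_2}$ is a derivation of $\Acal$; these are the maps $D_{(0,j,k)}$ of the statement. For the monomials with $j_1\ge 1$, the relation forces $\varphi_{\underline{j}}(ab)=a\,\varphi_{\underline{j}}(b)$, which in particular confirms that each $\varphi_{(\underline{j},k)}$ is a $k$-linear map (the stronger module identity is not needed here). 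The required finite support of the $\varphi_{(\underline{j},k)}(a)$ and $D_{(0,j,k)}(a)$ is inherited directly from the finite support of the $\lambda_{(\underline{j},k)}(a)$.

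The main obstacle is bookkeeping the perm asymmetry correctly, keeping straight in each of the two products which polynomial factor is evaluated at $0$ and hence which index block is killed. As a consistency check one can run the same computation with $\dashv$, using $\underline{1}\otimes a\dashv\underline{1}\otimes b=\underline{1}\otimes ab$; this reproduces the same derivation identity on the $j_1=0$ block and the mirror relation $\varphi_{\underline{j}}(ab)=\varphi_{\underline{j}}(a)\,b$ on the $j_1\ge 1$ block, confirming that the two products impose compatible constraints and that the decomposition is well posed.
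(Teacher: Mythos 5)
Your proposal is correct and follows essentially the same route as the paper's own proof: expand $d(\underline{1}\otimes a)$ in coordinates, apply the Leibniz rule for $\vdash$ to $\underline{1}\otimes a\vdash\underline{1}\otimes b=\underline{1}\otimes ab$, and use the perm evaluation $x^{j_1}\circ 1=x^{j_1}(0)$ to kill the $j_1\geq 1$ block in one product while the other survives, so that coefficient comparison yields derivations on the $j_1=0$ block and mere linear maps $\varphi_{(\underline{j},k)}$ on the rest. If anything, your aggregation $D_{j_2}:=\sum_k\lambda_{((0,j_2),k)}(\,\cdot\,)u_k$ makes the coefficient-comparison step cleaner than the paper's componentwise phrasing, and your $\dashv$ consistency check is a harmless addition the paper omits.
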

	
	\begin{proof}
		Let us consider the coordinate functions of the derivation $d$ defined by the basis $\left\{x^{\underline{i}}\;|\; \underline{i}\coloneqq \left(i_1,i_2\right)\in \Nbb^2\right\}$ of $\Pcal$ and $\{u_k\;|\; k\in I\}$ of $\Acal$:
		\begin{equation*}
			d(\underline{1}\otimes a)=\displaystyle\sum_{(\underline{i},k)\in\Nbb^2\times I}x^{\underline{i}}\otimes\mu_{(\underline{i},k)}u_k =\displaystyle\sum_{(\underline{i},k)\in \Nbb^2\times I} x^{\underline{i}}\otimes d_{(\underline{i},k)}(a),
		\end{equation*}
		where $\mu_{(\underline{i},k)}\in k$. By linearity $d_{(\underline{i},k)}\in Hom_{k-Vect}(\Acal,\Acal)$ for all $(\underline{i},k)\in \Nbb^2\times I$. Let $b\in\Acal$, then
		\begin{align*}
			d(\underline{1}\otimes a b) & = d(\underline{1}\otimes a\vdash \underline{1}\otimes b) \\
			& = d(\underline{1}\otimes a)\vdash \underline{1}\otimes b + \underline{1}\otimes a\vdash d(\underline{1}\otimes b)\\
			& = \left(\displaystyle\sum_{(\underline{i},k)\in \Nbb^2\times I} x^{\underline{i}}\otimes d_{(\underline{i},k)}(a)\right)\vdash \underline{1}\otimes b \\
			& \hspace{0.4 cm} + \underline{1}\otimes a\vdash \left(\displaystyle\sum_{(\underline{i},k)\in \Nbb^2\times I} x^{\underline{i}}\otimes d_{(\underline{i},k)}(b)\right)\\
			& = \displaystyle\sum_{(\underline{i},k)\in \Nbb^2\times I} x^{\underline{i}}\bullet \underline{1}\otimes d_{(\underline{i},k)}(a)b \; +\; \displaystyle\sum_{(\underline{i},k)\in \Nbb^2\times I} \underline{1}\bullet x^{\underline{i}}\otimes ad_{(\underline{i},k)}(b). 
		\end{align*}
		Let us analyse the terms $x^{\underline{i}}\bullet \underline{1}$ in the first sum. We have:
		\begin{equation*}
			x^{\underline{i}}\bullet \underline{1} = x^{i_1}\otimes x^{i_2}\bullet 1\otimes 1 = x(0)^{i_{1}}\otimes x^{i_2}
			= \left\{ \begin{array}{lcc} 0 & \text{if} & i_{1}\neq 0, \\ \\ \lambda_{0,i_2} & \text{ if } & i_1 = 0.  \end{array} \right.
		\end{equation*}
		Picking up from the previous equation, we obtain:
		\begin{align*}
			d(\underline{1}\otimes ab) & = \displaystyle\sum_{(0,j,k)\in\Nbb^2\times I}\left(1\otimes x^{j}\otimes d_{(0,j,k)}(a)b \; +\; 1\otimes x^{j}\otimes ad_{(0,j,k)}(b)  \right) \\ 
			& \hspace{0.4 cm} + \left(\displaystyle\sum_{(\underline{j},k)\in J\times I} x^{\underline{j}}\otimes \varphi_{(\underline{j},k)}(b)\right),
		\end{align*}
		where $J\subseteq\Nbb^2$ and $\underline{j}=(j_1,j_2)\in J$ satisfy $j_1>0$. Arguing in the same way as in lemma \ref{Der_Perm} we get $D_{(0,j,k)}\coloneqq d_{(0,j,k)}\in \Dcal er(\Acal)$ for all $(0,j,k)\in \Nbb^2\times I$, and therefore
		\begin{equation*}
			d(1\otimes a) = \displaystyle\sum_{(0,j,k)\in\Nbb^2\times I} 1\otimes x^{j}\otimes D_{(0,j,k)}(a) + \left(\displaystyle\sum_{(\underline{j},k)\in J\times I} x^{\underline{j}}\otimes \varphi_{(\underline{j},k)}(a)\right).
		\end{equation*}
	\end{proof}
	
	
	
	\begin{thm}
		Let $\Acal$ be a unital associative algebra. Every derivation $d$ of $\Dcal$ can be written as 
		\begin{equation*}
			d = \sum_{(0,j,k)\in\Nbb^2\times I}id_{k[x]}\otimes L_{ x^{j}}\otimes D_{(0,j,k)}+\sum_{(\underline{i},k)\in\Nbb^2\times I} d_{(\underline{i},k)}\otimes R_{u_k},
		\end{equation*}
		where $d_{(\underline{i},k)}\in\Dcal er(\Pcal)$ for all $(\underline{i},k)\in \Nbb^2\times I$, $D_{(0,j,k)}\in\Dcal er(\Acal)$ for all $j\in\Nbb$, $k\in I$, and $D_{(0,j,k)}=d_{(\underline{i},k)}=0$ for all but finitely many $(\underline{i},k)\in \Nbb^2\times I$ and $(j,k)\in\Nbb\times I$.
	\end{thm}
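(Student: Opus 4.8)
The plan is to feed the two preceding lemmas into the universal identity (\ref{Der_general}) and then simply read off the two families of operators. For an arbitrary generator $\underline P\otimes a=P(x)\otimes f(x)\otimes a$ of $\Dcal$, equation (\ref{Der_general}) writes $d(\underline P\otimes a)$ as the sum of $d(\underline 1\otimes a)\vdash(\underline P\otimes 1)$ and $(\underline 1\otimes a)\vdash d(\underline P\otimes 1)$. I would substitute Lemma~\ref{Der_Alg} into the first summand and Lemma~\ref{Der_Perm} into the second, simplify each product using the explicit perm multiplication $x^{m}\circ P=x^{m}(0)\,P(x)$, and check that what remains is exactly the claimed operator evaluated at $\underline P\otimes a$. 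Since the generators $\underline P\otimes a$ span $\Dcal$ and $d$ is linear, the resulting pointwise identity upgrades to the stated identity of operators, and the finiteness of the index sets is inherited directly from the two lemmas.

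For the first summand, Lemma~\ref{Der_Alg} expresses $d(\underline 1\otimes a)$ as a sum over $J\times I$ of terms $x^{\underline j}\otimes\varphi_{(\underline j,k)}(a)$ with $j_1>0$, plus a sum over $(0,j,k)$ of terms $1\otimes x^{j}\otimes D_{(0,j,k)}(a)$. Applying $\vdash(\underline P\otimes 1)$ amounts to multiplying the first tensor slot by $P$ under $\circ$. Because $x^{j_1}\circ P=x^{j_1}(0)\,P$ vanishes precisely when $j_1>0$, the entire $J$-indexed sum is annihilated and only the $j_1=0$ contributions survive; for those, $1\circ P=P$, so the surviving term is $P(x)\otimes x^{j}f(x)\otimes D_{(0,j,k)}(a)$, which is exactly $\bigl(id_{k[x]}\otimes L_{x^{j}}\otimes D_{(0,j,k)}\bigr)(\underline P\otimes a)$. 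This is where the perm structure does the real work, and I expect it to be the main (if modest) obstacle: one must verify carefully that the evaluation-at-zero built into $\circ$ kills exactly the $\varphi$-terms and repackages the derivations $D_{(0,j,k)}$ as left multiplication by $x^{j}$ on the middle factor.

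For the second summand, Lemma~\ref{Der_Perm} gives $d(\underline P\otimes 1)=\sum_{(\underline i,k)}d_{(\underline i,k)}(\underline P)\otimes u_k$. Since $\underline 1=1\otimes 1$ is a left unit for $\bullet$, we have $\underline 1\bullet d_{(\underline i,k)}(\underline P)=d_{(\underline i,k)}(\underline P)$, so $(\underline 1\otimes a)\vdash d(\underline P\otimes 1)=\sum_{(\underline i,k)}d_{(\underline i,k)}(\underline P)\otimes a\,u_k$. Recognizing $a\,u_k=R_{u_k}(a)$ identifies this with $\sum_{(\underline i,k)}\bigl(d_{(\underline i,k)}\otimes R_{u_k}\bigr)(\underline P\otimes a)$. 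Adding the two contributions yields the desired expression on each generator. The finiteness statements follow because Lemmas~\ref{Der_Perm} and~\ref{Der_Alg} guarantee that only finitely many of the $d_{(\underline i,k)}$, $\varphi_{(\underline j,k)}$, and $D_{(0,j,k)}$ are nonzero, while the memberships $d_{(\underline i,k)}\in\Dcal er(\Pcal)$ and $D_{(0,j,k)}\in\Dcal er(\Acal)$ are precisely what those lemmas assert.
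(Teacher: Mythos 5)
Your proposal is correct and follows essentially the same route as the paper: substitute Lemma~\ref{Der_Alg} and Lemma~\ref{Der_Perm} into identity (\ref{Der_general}), use the evaluation-at-zero in the perm product to annihilate the $\varphi_{(\underline{j},k)}$-terms (those with $j_1>0$) and the left-unit property of $\underline{1}$ to simplify the remaining terms, then read off the operators $id_{k[x]}\otimes L_{x^{j}}\otimes D_{(0,j,k)}$ and $d_{(\underline{i},k)}\otimes R_{u_k}$. In fact you spell out the vanishing of the $J$-indexed sum more explicitly than the paper does, which leaves that step implicit in its computation.
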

	
	\begin{proof}
		Let $\underline{P}\coloneqq P(x)\otimes f(x)\in \Pcal$ and $a\in\Acal$. By Lemmas \ref{Der_Perm} and \ref{Der_Alg}, there exist $d_{(\underline{i},k)}\in \Dcal er(\Pcal)$, $(\underline{i},k)\in\Nbb^2\times I$,  $D_{(0,j,k)}\in \Dcal er(\Acal)$, and homomorphisms $\varphi_{(\underline{j},k)}\in Hom_{k-Vect}(\Acal, \Acal)$, $\underline{j} = (j_1,j_2)\in J\subseteq \Nbb\times\Nbb$ with $j_1>0$, such that
		\begin{align*}
			d(\underline{P}\otimes a) & = d(1\otimes a\vdash\underline{P}\otimes 1)\\
			& = d(\underline{1}\otimes a)\vdash \underline{P}\otimes 1 + 1\otimes a \vdash d(\underline{P}\otimes 1) \\
			& =  \Biggl(\displaystyle\sum_{(0,j,k)\in\Nbb^2\times I} 1\otimes x^{j}\otimes D_{(0,j,k)}(a)\\
			& \hspace{0.8 cm} +\left(\displaystyle\sum_{(\underline{j},k)\in J\times I} x^{\underline{j}}\otimes \varphi_{(\underline{j},k)}(a)\right)\Biggr)\vdash \underline{P}\otimes 1 \\
			& \hspace{0.4 cm} + 1\otimes a\vdash \left(\displaystyle\sum_{(\underline{i},k)\in\Nbb^2\times I}d_{(\underline{i},k)}\left(\underline{P}\right)\otimes u_k\right) \\
			& = \displaystyle\sum_{(0,j,k)\in\Nbb^2\times I} P(x)\otimes x^{j}f(x)\otimes D_{(0,j,k)}(a) + \displaystyle\sum_{(\underline{i},k)\in\Nbb^2\times I}^n d_{(\underline{i},k)}(\underline{P})\otimes a u_{k} \\
			& = \sum_{(0,j,k)\in\Nbb^2\times I}P(x)\otimes L_{ x^{j}}(f(x))\otimes D_{(0,j,k)}(a)\\
			&\hspace{0.4 cm} +\sum_{(\underline{i},k)\in\Nbb^2\times I} d_{(\underline{i},k)}(\underline{P})\otimes R_{u_k}(a).
		\end{align*}
	\end{proof}
	
	\section{Diderivations on the Tensor Product Dialgebra}\label{sec:Dider_tensor}
	\justify
	Similarly to lemma \ref{Der_Perm}, the diderivations can be expressed in terms of coordinate functions on the perm algebra, which behave as right derivations. We reamrk for the reader that if $\delta\in\Dcal ider(\Dcal)$, then
	\begin{equation}
		\delta(\underline{P}\otimes a) = \delta(\underline{1}\otimes a\vdash \underline{P}\otimes 1)
		= \delta(\underline{1}\otimes a)\dashv \left(\underline{P}\otimes 1\right) + \left(\underline{1}\otimes a\right)\vdash \delta(\underline{P}\otimes 1).
	\end{equation}
	Therefore, as in the previous section, we need to study the expressions $\delta(\underline{1}\otimes a)$ and $\delta(\underline{P}\otimes 1)$, for $a\in\Acal$ and $\underline{P}\in\Pcal$.  This will be carried out in the following lemmas.
	
	\begin{Lem}
		Let $\{u_k\;|\; i\in I\}$ be a basis of $\Acal$ as a $k$-vector space and $\delta\in \Dcal ider(\Dcal)$ be a diderivation. Then for each $(\underline{i},k)\in \Nbb^2\times I$ there exist left derivations $\dcap_{(\underline{i},k)}$ of $\Pcal$ such that for every $\underline{P}\coloneqq P(x)\otimes f(x)\in \Pcal$ we have
		\begin{equation*}
			\delta\left(\underline{P}\otimes 1\right) =\displaystyle\sum_{(\underline{i},k)\in \Nbb^2\times I}\dcap_{(\underline{i},k)}\left(\underline{P}\right)\otimes u_k
		\end{equation*}
		and $\dcap_{(\underline{i},k)}(\underline{P})=0$ for all but finitely many $(\underline{i},k)\in \Nbb^2\times I$.
	\end{Lem}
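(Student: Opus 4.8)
The plan is to mirror the proof of Lemma~\ref{Der_Perm}, replacing the two-sided Leibniz rule by the mixed Leibniz property of a diderivation and carefully tracking the order reversal that the operation $\dashv$ induces on the perm product. First I would expand $\delta(\underline{P}\otimes 1)$ in the basis $\{x^{\underline{i}}\otimes u_k\}$ of $\Dcal$, writing
\[
\delta(\underline{P}\otimes 1) = \sum_{(\underline{i},k)\in\Nbb^2\times I}\dcap_{(\underline{i},k)}(\underline{P})\otimes u_k,
\]
where $\dcap_{(\underline{i},k)}(\underline{P})$ collects the $\Pcal$-component of $\delta(\underline{P}\otimes 1)$ attached to $u_k$. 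Linearity of $\delta$ makes each $\dcap_{(\underline{i},k)}$ a linear endomorphism of $\Pcal$, and the finite-support convention for coordinate maps guarantees that $\dcap_{(\underline{i},k)}(\underline{P})=0$ for all but finitely many indices.

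Next I would evaluate $\delta$ on a product. Since $(\underline{P}\otimes 1)\vdash(\underline{Q}\otimes 1) = \underline{P}\bullet\underline{Q}\otimes 1$, the diderivation identity yields
\[
\delta(\underline{P}\bullet\underline{Q}\otimes 1) = \delta(\underline{P}\otimes 1)\dashv(\underline{Q}\otimes 1) + (\underline{P}\otimes 1)\vdash\delta(\underline{Q}\otimes 1).
\]
The key computation is the first summand: expanding $\delta(\underline{P}\otimes 1)$ and using that, for $R\in\Pcal$, one has $(R\otimes u_k)\dashv(\underline{Q}\otimes 1) = (\underline{Q}\bullet R)\otimes u_k$ — the operation $\dashv$ reverses the order of the two perm factors — turns it into $\sum_{(\underline{i},k)}\bigl(\underline{Q}\bullet\dcap_{(\underline{i},k)}(\underline{P})\bigr)\otimes u_k$. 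The second summand, built from $\vdash$, preserves the natural order and gives $\sum_{(\underline{i},k)}\bigl(\underline{P}\bullet\dcap_{(\underline{i},k)}(\underline{Q})\bigr)\otimes u_k$.

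Finally, expanding the left-hand side as $\sum_{(\underline{i},k)}\dcap_{(\underline{i},k)}(\underline{P}\bullet\underline{Q})\otimes u_k$ and matching coefficients against the linearly independent family $\{u_k\}$ gives, for each index,
\[
\dcap_{(\underline{i},k)}(\underline{P}\bullet\underline{Q}) = \underline{P}\bullet\dcap_{(\underline{i},k)}(\underline{Q}) + \underline{Q}\bullet\dcap_{(\underline{i},k)}(\underline{P}),
\]
which is precisely the defining identity of $\Lcal\Dcal er(\Pcal)$. Hence each $\dcap_{(\underline{i},k)}$ is a left derivation of $\Pcal$, establishing the claim.

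I expect the only genuine subtlety — as contrasted with the derivation case of Lemma~\ref{Der_Perm} — to be the bookkeeping of the order reversal produced by $\dashv$. No new computation is needed beyond that of the earlier lemma; it is exactly this swap of the two perm factors, forced by the mixed Leibniz rule, that converts the coordinate maps from two-sided derivations into \emph{left} derivations, so the essential point is to keep the perm factors in their correct positions throughout.
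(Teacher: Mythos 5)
Your proposal is correct and follows essentially the same route as the paper's own proof: define the coordinate maps $\dcap_{(\underline{i},k)}$ via the bases $\{x^{\underline{i}}\}$ and $\{u_k\}$, apply $\delta$ to $(\underline{P}\otimes 1)\vdash(\underline{Q}\otimes 1)=\underline{P}\bullet\underline{Q}\otimes 1$, and use that $\dashv$ swaps the two perm factors so that the mixed Leibniz rule forces the left-derivation identity $\dcap_{(\underline{i},k)}(\underline{P}\bullet\underline{Q})=\underline{P}\bullet\dcap_{(\underline{i},k)}(\underline{Q})+\underline{Q}\bullet\dcap_{(\underline{i},k)}(\underline{P})$. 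Your write-up simply makes explicit the order-reversal bookkeeping that the paper's computation performs in one chain of equalities.
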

	
	\begin{proof}
		Following the approach used in the proof of Lemma \ref{Der_Perm}, we consider the coordinate functions $\dcap_{(\underline{i},k)}$ defined by the basis $\{u_k\;|\;k\in I\}$ of $\Acal$ and the basis of monomials $\{x^{\underline{i}}\;|\; \underline{i}\in\Nbb^2\}$. Let $\underline{P},\underline{Q}\in\Pcal$. We have:    
		\begin{align*}
			\delta(\underline{P}\bullet\underline{Q}\otimes 1) & = \delta(\underline{P}\otimes 1\vdash \underline{Q}\otimes 1) \\
			& = \delta(\underline{P}\otimes 1)\dashv \underline{Q}\otimes 1 + \underline{P}\otimes 1\vdash \delta(\underline{Q}\otimes 1) \\
			& =\displaystyle\sum_{(\underline{i},k)\in \Nbb^2\times I}\left(\underline{Q}\bullet \dcap_{(\underline{i},k)}(\underline{P}) + \underline{P}\bullet\dcap_{(\underline{i},k)}(\underline{Q})\right)\otimes u_k.
		\end{align*}
		The previous chain of equations shows that $\dcap_{(\underline{i},k)}\in\Lcal\Dcal er(\Pcal)$.
	\end{proof}
	
	\justify
	Now, over $\Acal$, the diderivation products coincide due to $\Acal$ being the algebra component of $\Dcal$. Consequently, the diderivations must behave as derivations when expressed in terms of the coordinate functions determined by the respective basis of $k[x]\otimes k[x]$ and $\Acal$.
	
	\begin{Lem}
		Let $\left\{x^{\underline{i}}\;|\; \underline{i}\coloneqq \left(i_1,i_2\right)\in \Nbb\times \Nbb\right\}$ be the canonical basis of $k[x]\otimes k[x]$ as a $k$-vector space, and $\delta\in \Dcal ider(\Dcal)$ be a derivation. Then, for every $a\in\Acal$, the coordinate functions $\delta_{(\underline{i},k)}\in \Dcal er(\Acal)$.
	\end{Lem}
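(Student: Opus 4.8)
The plan is to mirror the coordinate-function argument of Lemma~\ref{Der_Alg}, but to exploit the fact that a diderivation obeys a \emph{mixed} Leibniz rule; this will in fact yield a cleaner conclusion than in the derivation case, with no homomorphism remainder. First I would fix a basis $\{u_k \mid k\in I\}$ of $\Acal$ and, using the canonical basis $\{x^{\underline{i}} \mid \underline{i}\in\Nbb^2\}$ of $\Pcal$, write
\[
\delta(\underline{1}\otimes a) = \sum_{(\underline{i},k)\in\Nbb^2\times I} x^{\underline{i}}\otimes \delta_{(\underline{i},k)}(a),
\]
with only finitely many nonzero terms for each $a$, exactly as in Lemma~\ref{Der_Alg}. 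Linearity of $\delta$ makes each coordinate function $\delta_{(\underline{i},k)}$ a $k$-linear endomorphism of $\Acal$; the substance of the lemma is therefore to verify the Leibniz identity for each of them.

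Next I would compute $\delta(\underline{1}\otimes ab)$ in two ways. Since $\underline{1}\bullet\underline{1}=\underline{1}$, we have $\underline{1}\otimes ab = (\underline{1}\otimes a)\vdash(\underline{1}\otimes b)$, and the diderivation rule gives
\[
\delta(\underline{1}\otimes ab) = \delta(\underline{1}\otimes a)\dashv(\underline{1}\otimes b) + (\underline{1}\otimes a)\vdash\delta(\underline{1}\otimes b).
\]
I would then insert the coordinate expansion into each summand and evaluate using the explicit formulas for $\vdash$ and $\dashv$ on $\Dcal$.

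The key step — and the point at which this argument departs decisively from Lemma~\ref{Der_Alg} — is the evaluation of the perm-algebra products that appear. In the first summand the operation $\dashv$ produces the perm factor $\underline{1}\bullet x^{\underline{i}}$, and in the second the operation $\vdash$ produces the very same factor $\underline{1}\bullet x^{\underline{i}}$. In both cases the constant polynomial $1$ occupies the left (evaluated-at-zero) slot, so $\underline{1}\bullet x^{\underline{i}} = x^{\underline{i}}$ and no index collapses. This contrasts sharply with the derivation case, where the first Leibniz term was instead $x^{\underline{i}}\bullet\underline{1}$ and therefore annihilated every monomial with $i_1>0$, forcing the appearance of the homomorphism part $\varphi_{(\underline{j},k)}$. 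Here no collapse occurs, which is precisely why the lemma can assert that \emph{all} coordinate functions $\delta_{(\underline{i},k)}$ are derivations rather than only those with $i_1=0$.

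Finally I would collect the two summands into
\[
\delta(\underline{1}\otimes ab) = \sum_{(\underline{i},k)} x^{\underline{i}}\otimes\bigl(\delta_{(\underline{i},k)}(a)\,b + a\,\delta_{(\underline{i},k)}(b)\bigr)
\]
and compare it with the direct expansion $\delta(\underline{1}\otimes ab) = \sum_{(\underline{i},k)} x^{\underline{i}}\otimes\delta_{(\underline{i},k)}(ab)$. Since $\{x^{\underline{i}}\otimes u_k\}$ is a basis of $\Dcal$, matching coefficients gives $\delta_{(\underline{i},k)}(ab) = \delta_{(\underline{i},k)}(a)\,b + a\,\delta_{(\underline{i},k)}(b)$ for all $a,b\in\Acal$, i.e.\ $\delta_{(\underline{i},k)}\in\Dcal er(\Acal)$. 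I expect the only delicate part to be the bookkeeping: keeping the two operations $\vdash$ and $\dashv$ straight and confirming on which side the evaluation at zero acts in each term. Once that is pinned down, the conclusion follows from a routine coefficient comparison.
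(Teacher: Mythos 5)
Your proposal is correct and follows essentially the same argument as the paper: expand $\delta(\underline{1}\otimes a)$ in coordinates, apply the mixed Leibniz rule to $\underline{1}\otimes ab$, observe that both the $\dashv$ and $\vdash$ terms produce the non-collapsing factor $\underline{1}\bullet x^{\underline{i}} = x^{\underline{i}}$, and compare coefficients. Your added remark explaining why no homomorphism remainder appears (unlike in Lemma~\ref{Der_Alg}, where the factor $x^{\underline{i}}\bullet\underline{1}$ kills all monomials with $i_1>0$) is a correct and useful observation that the paper leaves implicit.
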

	
	\begin{proof}
		Let $a,b\in \Acal$. Denoting by $\delta_{(\underline{i},k)}$ the coordinate functions of $\delta$ we have
		\begin{align*}
			\delta(\underline{1}\otimes ab) & = \delta(\underline{1}\otimes a \vdash \underline{1}\otimes b) \\
			& = \delta(\underline{1}\otimes a)\dashv \underline{1}\otimes b + \underline{1}\otimes a\vdash \delta(\underline{1}\otimes b)\\
			& = \left(\displaystyle\sum_{(\underline{i},k)\in \Nbb^2\times I} x^{\underline{i}}\otimes \delta_{(\underline{i},k)}(a)\right)\dashv \underline{1}\otimes b\\
			& \hspace{0.4 cm} +\underline{1}\otimes a\vdash \left(\displaystyle\sum_{(\underline{i},k)\in \Nbb^2\times I} x^{\underline{i}}\otimes \delta_{(\underline{i},k)}(b)\right)\\
			& = \displaystyle\sum_{(\underline{i},k)\in \Nbb^2\times I} x^{\underline{i}}\otimes \left(\delta_{(\underline{i},k)}(a)b + a\delta_{(\underline{i},k)}(b)\right).
		\end{align*}
		This clearly implies $\delta_{(\underline{i},k)}\in \Dcal er(\Acal)$.
	\end{proof}
	\justify
	Altogether, we have the following explicit description of diderivations on $\Dcal$.
	
	\begin{thm}
		Let $\Acal$ be a unital associative algebra. Every diderivation $\delta$ of $\Dcal$ can be written as 
		\begin{equation*}
			\delta = \sum_{(\underline{i},k)\in\Nbb^2\times I}\left( x^{i_1}ev_{0} \otimes L_{x^{i_2}} \otimes \delta_{(\underline{i},k)} +  \dcap_{(\underline{i},k)} \otimes R_{u_k}\right),   
		\end{equation*}
		where $ev_{0}$ is the evaluation morphism at zero, $\delta_{(\underline{i},k)}\in\Dcal er(\Acal)$, $\dcap_{(\underline{i},k)}\in\Rcal\Dcal er(\Pcal)$ and $\delta_{\underline{i},k}=\dcap_{(\underline{i},k)}=0$ for all but finitely many $(\underline{i},k)\in\Nbb^2\times I$.
	\end{thm}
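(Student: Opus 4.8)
The plan is to start from the decomposition of $\delta(\underline{P}\otimes a)$ recorded at the opening of this section, namely
\[
\delta(\underline{P}\otimes a) = \delta(\underline{1}\otimes a)\dashv(\underline{P}\otimes 1) + (\underline{1}\otimes a)\vdash\delta(\underline{P}\otimes 1),
\]
and to feed into it the two coordinate expansions furnished by the preceding lemmas: the expression $\delta(\underline{1}\otimes a) = \sum_{(\underline{i},k)} x^{\underline{i}}\otimes\delta_{(\underline{i},k)}(a)$ with $\delta_{(\underline{i},k)}\in\Dcal er(\Acal)$, together with $\delta(\underline{P}\otimes 1) = \sum_{(\underline{i},k)}\dcap_{(\underline{i},k)}(\underline{P})\otimes u_k$, where the $\dcap_{(\underline{i},k)}$ are the perm-side derivations produced there. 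Writing $\underline{P} = P(x)\otimes f(x)$ and $x^{\underline{i}} = x^{i_1}\otimes x^{i_2}$, the whole argument then reduces to evaluating the two dialgebra products factor by factor and matching the outcome against the stated operator sum.

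First I would treat the term $\delta(\underline{1}\otimes a)\dashv(\underline{P}\otimes 1)$. Expanding the $\dashv$ product of $\Dcal$ on each summand, the single-variable first factor is governed by the perm product, which evaluates the right operand at zero and yields $P(0)x^{i_1}$; the middle factor becomes $x^{i_2}f(x)$; and the algebra factor becomes $\delta_{(\underline{i},k)}(a)\cdot 1$. The key observation is that the assignment $P(x)\mapsto P(0)x^{i_1}$ is precisely the composite operator $x^{i_1}ev_{0}$ on $k[x]$, while $f(x)\mapsto x^{i_2}f(x)$ is $L_{x^{i_2}}$; hence this term equals $\sum_{(\underline{i},k)}\bigl(x^{i_1}ev_{0}\otimes L_{x^{i_2}}\otimes\delta_{(\underline{i},k)}\bigr)(\underline{P}\otimes a)$. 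The evaluation-at-zero coming from the $\circ$ product is exactly what produces the factor $ev_{0}$, and keeping track of which operand is evaluated at $0$ under $\dashv$ is the step that requires care.

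Next I would treat $(\underline{1}\otimes a)\vdash\delta(\underline{P}\otimes 1)$. Here the perm component of each summand is $\underline{1}\bullet\dcap_{(\underline{i},k)}(\underline{P})$, and since $\underline{1} = 1\otimes 1$ is a left unit for $\bullet$ (as recorded in the remark above), this collapses to $\dcap_{(\underline{i},k)}(\underline{P})$ with no evaluation occurring. The algebra component is $a\cdot u_k = R_{u_k}(a)$, so this term equals $\sum_{(\underline{i},k)}\bigl(\dcap_{(\underline{i},k)}\otimes R_{u_k}\bigr)(\underline{P}\otimes a)$. Adding the two contributions yields exactly the claimed expression for $\delta$ as an operator on all of $\Dcal$, since $\underline{P}\otimes a$ ranges over a spanning set.

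It remains to record the regularity properties: the membership $\delta_{(\underline{i},k)}\in\Dcal er(\Acal)$ and the perm-derivation property of $\dcap_{(\underline{i},k)}$ are supplied directly by the two lemmas, and the vanishing of $\delta_{(\underline{i},k)}$ and $\dcap_{(\underline{i},k)}$ for all but finitely many $(\underline{i},k)$ is inherited from the finiteness of those coordinate expansions. I expect the main obstacle to be purely book-keeping: correctly expanding the mixed $\dashv$/$\vdash$ products factor by factor — in particular respecting the asymmetry of the perm product $\circ$ under the two dialgebra operations — and then recognizing the composite map $P\mapsto P(0)x^{i_1}$ as $x^{i_1}ev_{0}$ rather than as an ordinary left multiplication. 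Once these elementary operators are identified, the equality of the two sides is a direct term-by-term comparison.
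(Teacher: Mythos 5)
Your proposal is correct and follows essentially the same route as the paper's proof: apply the mixed Leibniz identity to $\delta(\underline{1}\otimes a\vdash \underline{P}\otimes 1)$, substitute the two coordinate expansions from the preceding lemmas, and compute the $\dashv$ and $\vdash$ products factor by factor to obtain $P(0)x^{i_1}\otimes x^{i_2}f(x)\otimes\delta_{(\underline{i},k)}(a)$ and $\dcap_{(\underline{i},k)}(\underline{P})\otimes au_k$. Your explicit identification of $P\mapsto P(0)x^{i_1}$ with $x^{i_1}ev_0$ and of the left-unit collapse $\underline{1}\bullet\dcap_{(\underline{i},k)}(\underline{P})=\dcap_{(\underline{i},k)}(\underline{P})$ merely makes transparent what the paper leaves implicit in its final display.
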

	
	\begin{proof}
		Let $\underline{P}=P(x)\otimes f(x)\in \Pcal$, and $a\in\Acal$. We have:
		\begin{align*}
			\delta(\underline{P}\otimes a) & = \delta(\underline{1}\otimes a\vdash \underline{P}\otimes 1) \\
			& = \delta(\underline{1}\otimes a)\dashv \left(\underline{P}\otimes 1\right) + \left(\underline{1}\otimes a\right)\vdash \delta(\underline{P}\otimes 1)\\
			& = \left(\sum_{(\underline{i},k)\in\Nbb^2\times I}x^{\underline{i}}\otimes \delta_{(\underline{i},k)}(a)\right)\dashv (\underline{P}\otimes 1)\\
			&\hspace{0.4 cm} +(\underline{1}\otimes a)\left(\vdash \displaystyle\sum_{(\underline{i},k)\in\Nbb^2\times  I}\dcap_{(\underline{i},k)}\left(\underline{P}\right)\otimes u_k\right)\\
			& = \sum_{(\underline{i},k)\in\Nbb^2\times I}P(0)x^{i_1}\otimes x^{i_2}f(x)\otimes \delta_{(\underline{i},k)}(a) + \displaystyle\sum_{(\underline{i},k)\in \Nbb^2\times I}\dcap_{(\underline{i},k)}(\underline{P})\otimes a u_k.
		\end{align*}
	\end{proof}
	
	\section{Conclusions and Future Work}
	
	In this work we have studied the structure of derivations and diderivations on tensor product dialgebras constructed from a perm algebra and a unital non-associative algebra.
	The findings enhance our knowledge of the interaction between perm algebras and non-associative algebraic systems by extending classical decomposition theorems of derivations in associative and Lie algebra settings into the more general context of dialgebras.
	
	\subsection*{Future Directions}
	
	The methods and results presented in this work open several promising directions for further research. 
	A natural continuation would be to explore operadic generalizations, which would involve applying the decomposition theorems established here to larger classes of operads associated with perm algebras, such as pre-Lie and dendriform structures. 
	A more cohesive understanding of the derivation theories of these algebraic frameworks may result from this method, which may also uncover deeper structural relationships between them.
	\justify
	From a cohomological point of view, examining the behavior of derivations and diderivations would be intriguing, especially with regard to extension and deformation theory. 
	Gaining insight into these relationships may lead to improved classification methods and new invariants in the homological context.
	\justify
	Finally, another line of research involves classification problems for derivations and diderivations in wider families of tensor product dialgebras, including cases with infinite-dimensional components or additional algebraic constraints. 
	Such studies may uncover new regularities in the behaviour of derivation spaces and highlight structural properties that extend beyond the finite-dimensional context.

\end{document}